\documentclass[12pt,reqno]{amsart}
\usepackage{amssymb}


\usepackage[curve,matrix,arrow]{xy}

\textwidth 15cm
\oddsidemargin 0.7cm
\evensidemargin 0.7cm
\textheight 40
\baselineskip

\theoremstyle{plain} 
\numberwithin{equation}{section}

\theoremstyle{definition}

\newtheorem*{Th}{Theorem}
\newtheorem*{Prop}{Proposition}

\theoremstyle{remark}


\def\CC{{\mathcal{C}}}
\def\CD{{\mathcal{D}}}

\def\CK{{\mathcal{K}}}

\def\CU{{\mathcal{U}}}



\def\Dim{\operatorname{Dim}\nolimits}



\def\HHH{\operatorname{H}\nolimits}

\def\HH#1#2#3{\HHH^{#1}(#2,#3)}

\def\Homul{\operatorname{\underline{Hom}}\nolimits}

\def\Ext{\operatorname{Ext}\nolimits}

\def\hgs{\HH{*}{G}{k}}



\title[Complexes over finite groups]
{The ranks of homology of complexes of projective modules over finite groups}
\author[Jon F. Carlson]{Jon F. Carlson}
\address{Department of Mathematics, University of Georgia, 
Athens, GA 30602, USA}
\email{jfc@math.uga.edu}
\thanks{Research partially supported by 
Simons Foundation grant 054813-01}
\keywords{Finite Free Complexes, Toral rank conjecture}

\date\today
\subjclass{20C20, 20J06, 13D22}

\begin{document}

\begin{abstract} 
We show that counterexamples of Iyengar and Walker to the algebraic version
of Gunnar Carlsson's conjecture on the rank of the homology of a free complex
can be extended to examples over any finite group with many choices of 
the complex. 
\end{abstract}

\maketitle

Let $p$ be a prime.
A conjecture of Gunnar Carlsson \cite{GC} 
says that if an elementary abelian $p$-group
of rank $r$ acts freely on a CW-complex, then the sum of the dimensions of
the homology groups is at least $2^r$. The conjecture 
is still open. An algebraic version of the conjecture says that for 
such group $G$, and for any finite dimensional complex of free $kG$-modules
the sum of the dimensions of the homology groups of the complex is 
at least $2^r$. In a recent paper \cite{IW}, Iyengar and Walker gave a 
counterexample to this conjecture in the form of a cone over an 
endomorphism of the Koszul complex over $kG$. 

The purpose of this note is to point out that the Iyengar-Walker 
calculation combined with a construction of complexes of modules 
in a paper \cite{BC}  by Benson and this author gives a large number of 
counterexamples to the algebraic conjecture. Moreover, the counterexamples
are not cofined to the category of modules over elementary abelian groups
but can be constructed over any finite groups. Specifically, we 
prove the following. 

\begin{Th}
Suppose that $k$ is a field of odd characteristic $p \geq 3$ and that 
$G$ is a finite group having $p$-rank $r \geq 8$. Then there exists
an infinite number of mutually nonisomorphic 
finite dimensional complexes $\CD_*$ 
of projective $kG$-modules with the property that 
$\sum \Dim_k(\HHH_i(\CD_*)) < 2^r$. In addition, if $G$ is a $p$-group, then 
$\CD_*$ is a complex of free modules. 
\end{Th}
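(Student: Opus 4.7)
The strategy is to reduce to the elementary abelian case already treated by Iyengar and Walker and then to transport the counterexample to $kG$ via the construction of \cite{BC}. Since $G$ has $p$-rank $r \geq 8$, we may fix an elementary abelian $p$-subgroup $E \leq G$ of rank exactly $r$. The theorem of \cite{IW} then provides a finite complex $\CC_{*}$ of free $kE$-modules, built as the cone on a carefully chosen endomorphism of the Koszul complex on a basis of the radical of $kE$, satisfying $\sum_{i} \Dim_{k} \HHH_{i}(\CC_{*}) < 2^{r}$. This is the seed of the construction.

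Next I would apply the construction of \cite{BC} to lift $\CC_{*}$ to a finite complex $\CD_{*}$ of projective $kG$-modules in such a way that the restriction $\res^{G}_{E}\CD_{*}$ agrees with $\CC_{*}$ up to a chain-contractible summand. Since $\res^{G}_{E}$ preserves $k$-vector-space dimensions and commutes with homology, one then has
\[
\sum_{i} \Dim_{k} \HHH_{i}(\CD_{*}) \;=\; \sum_{i} \Dim_{k} \HHH_{i}(\res^{G}_{E}\CD_{*}) \;=\; \sum_{i} \Dim_{k} \HHH_{i}(\CC_{*}) \;<\; 2^{r}.
\]
When $G$ is itself a $p$-group, $kG$ is local and every projective $kG$-module is free, giving the last clause of the theorem.

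For the infinite family, I would vary either the endomorphism used in the Iyengar-Walker construction, or the parameters (choice of cohomology class, relative projective resolution, and so on) in the Benson-Carlson lifting; either route produces infinitely many mutually nonisomorphic complexes $\CD_{*}$ satisfying the required bound, and distinct ones are distinguished for instance by the ranks of the individual terms $\CD_{i}$ or by invariants of their homology modules over $kG$.

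The principal technical difficulty lies in the lifting step: the Benson-Carlson machinery must be shown to apply to the highly structured complex produced in \cite{IW}, and in particular to lift the IW differentials faithfully, so that the restriction back to $kE$ genuinely controls the homology of $\CD_{*}$ rather than merely bounding it term-by-term. Once this compatibility is in hand, the dimension estimate is automatic from the restriction argument above, and the non-isomorphism of the resulting family follows from standard support-variety or rank considerations.
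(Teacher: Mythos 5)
There is a genuine gap, and it sits exactly where you locate ``the principal technical difficulty'': the lifting step does not exist in the form you need, and it is not what the Benson--Carlson construction provides. You ask for a finite complex $\CD_*$ of projective $kG$-modules whose restriction $\res^G_E\CD_*$ agrees with the Iyengar--Walker complex over $kE$ up to a contractible summand. The Iyengar--Walker complex is built from the Koszul complex on the elements $g_i-1$ of $\Rad(kE)$, a structure with no analogue in $kG$ for a general finite group, and restriction has no right inverse of the kind you want: $\Ind_E^G$ applied to the IW complex does give a complex of projective $kG$-modules, but its homology is the induced module on homology and so has dimension $[G:E]$ times too large, which destroys the bound; no other transfer mechanism is offered, and \cite{BC} does not contain one. (Your displayed equality is harmless --- restriction does not change underlying $k$-vector spaces --- but it only helps if the lift exists.)

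The paper avoids the subgroup $E$ entirely and works intrinsically over $kG$. One chooses a homogeneous system of parameters $\zeta_1,\dots,\zeta_r$ for $\hgs$, all of a single even degree $n$ (possible because the Krull dimension of $\hgs$ equals the $p$-rank $r$, and $n$ must be even since $p$ is odd). Each $\zeta_i$ yields a pushout complex $\CC^i_*$ with homology $k$ in degrees $0$ and $n-1$; the role of \cite{BC} (or of support varieties) is to show that $L_{\zeta_1}\otimes\cdots\otimes L_{\zeta_r}$ is projective, so that $\CC_*=\CC^1_*\otimes\cdots\otimes\CC^r_*$ is a complex of projective $kG$-modules playing the part of the Koszul complex. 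The degree-$(n-1)$ chain maps $\vartheta_i$ induce endomorphisms $\theta_i$ of $\CC_*$ that anticommute because $n-1$ is odd and $p$ is odd, generating an exterior algebra whose regular representation is $\HHH_*(\CC_*)$; the Iyengar--Walker computation then applies verbatim to the cone over $\theta_1\theta_2+\theta_3\theta_4+\theta_5\theta_6+\theta_7\theta_8$, giving homology of dimension $2^r-2^{r-6}<2^r$. The infinite family comes from varying the degree $n$, not from varying the endomorphism. To salvage your argument you would need either to supply the missing lifting theorem from $kE$ to $kG$ (which is not available) or to restructure it along these intrinsic lines.
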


\begin{proof}
The proof consists of two parts. The first is the construction of the 
complexes following \cite{BC}. The second in the calculation of the dimension
of the homology, which is described in detail in \cite{IW}. Because each part
is well developed in the literature, we give only a sketch of the proof.

As for notation, we emphasize that $\otimes$ means $\otimes_k$ unless 
otherwise indicated. If $M$ and $N$ are $kG$-modules then the tensor product
$M \otimes N$ is a $kG$-module using the diagonal coproduct $g \mapsto 
g \otimes g$ for $g \in G$. 

Let $\zeta_1, \dots, \zeta_r$ be a homogeneous set of parameters
for the cohomology ring $\hgs$. That is, 
the $p$-rank $r$ of $G$ is also the Krull dimension of the 
cohomology ring, and $\hgs$ is finitely generated as a module over the 
subring generated by $\zeta_1, \dots, \zeta_r$. In addition, assume that
there is a positive integer $n$ such that 
the degree of $\zeta_i$ is $n$ for all $i$. 
Such a choice can be made as long as $n$ is a common multiple of the 
degrees of some collection of generators of $\hgs$. 
Note that since $p$ is odd, odd degree elements of $\hgs$ are 
nilpotent, and hence, it must be that $n$ is even. 

We construct a complex as follows. Assume 
that $(P_*, \varepsilon)$ is a minimal 
projective $kG$-resolution of the trivial module $k$. Each $\zeta_i$ is 
identified with a unique cocycle $\zeta_i: P_n \to k$. So we can
construct a diagram:
\[
\xymatrix{
\dots \ar[r] & P_{n+1} \ar[r] & P_n \ar[r] \ar[d]^{\zeta_i} & 
P_{n-1} \ar[r] \ar[d] & P_{n-2} \ar[r] \ar@{=}[d] &\dots \ar[r] 
& P_0 \ar[r]^{\varepsilon} \ar@{=}[d]& k \ar[r] \ar@{=}[d] & 0 \\
{} & 0 \ar[r] & k \ar[r]^{\iota_i} & L_{\zeta_i} \ar[r] & 
P_{n-2} \ar[r] &\dots \ar[r] & 
P_0 \ar[r]^{\varepsilon} & k \ar[r] & 0
}
\]
where the $L_{\zeta_i}$ is the pushout, so that the bottom row of
the diagram is an exact sequence. 
Let $\CC^i_*$ be the complex 
\[
\xymatrix{
0 \ar[r] & L_{\zeta_i} \ar[r] &
P_{n-2} \ar[r] &\dots \ar[r] &
P_0 \ar[r] & 0
}
\]
Note that the homology of $\CC^i_*$ consists of two copies of $k$ in 
degrees $0$ and $n-1$. 

Let $\CC_* = \CC^1_* \otimes \dots \otimes \CC^r_*$. 
Using support varieties or the method of \cite[Theorem 4.1]{BC}, we see that 
$L_{\zeta_1} \otimes \dots \otimes L_{\zeta_r}$
is a projective $kG$-module. If $G$ is a $p$-group then it is a 
free module, because all projective modules over the group algebra
of a $p$-group are free. Thus $\CC_*$ is a complex of projective modules. 
Its homology is that of a space that is homotopic to a product of 
$r$ spheres of dimension $n-1$ with trivial $G$-action on homology. 

Now $\CC^i_*$ has a degree $n-1$ chain map $\vartheta_i$ that takes 
$P_0$ to $L_{\zeta_i}$ by $\iota_i\varepsilon$. The action on homology 
takes the $\HHH_0(\CC^i)$ isomorphically to 
$\HHH_{n-1}(\CC^i)$. This induces a chain map 
\[
\xymatrix{
\theta_i= 1 \otimes \dots \otimes 1 \otimes \vartheta_i
\otimes 1 \otimes \dots \otimes 1: \CC_* \ar[r] &  \CC_*.
} 
\]
Now notice that because $n-1$ is 
odd, we have that $\theta_i\theta_j = -\theta_j\theta_i$
for $i \neq j$. That is, switching the order of the composition is 
tantamount to chainging the ordering of the factors in the tensor product
of the complexes.

So the algebra of chain maps $R \subseteq \Homul_{kG}(\CC_*,\CC_*)$, 
generated by the $\theta_i$'s, is seen to be 
an exterior algebra. Moreover, $\HHH_*(\CC_*)$ is (with some adjusting 
for degrees) the left regular representation of $R$. 

Now we follow Iyengar-Walker (particularly, \cite[Cor. 2.3]{IW}). 
Assume that $r \geq 8$, and let $\theta = 
\theta_1\theta_2 + \theta_3\theta_4 + \theta_5\theta_6
+ \theta_7\theta_8$. Let $\CD_*$ be the cone over $\theta$ or 
equivalently, the third object in the triangle of $\theta$
in the derived category. 
Iyengar and Walker prove that its homology has dimension $2^r - 2^{r-6}$,
thus giving our desired result.
\end{proof}

The ``infinite number" claim in the 
statement of the theorem is proved by the fact that there is an infinite
number of choices for the degree $n$ of the elements in the homogeneous 
set of parameters. It is not certain, at this point, what happens if 
we change the parameters keeping the same $n$. It seems likely that we 
get nonisomorphic complexes. In addition,  the thing that is really important
is that the products $\vartheta_{2n-1}\vartheta_{2n}$, for $n = 1, \dots, 4$, 
have the same degree so that $\theta$ is a chain map. 

We emphasize that the theorem is not proved to hold in the case that $p=2$.
As Iyengar and Walker point out, this stems from the fact that in an 
exterior algebra in characteristic $2$, the squares of all elements in the 
radical are zero. In addition, it seems to indicate that if one were to 
attempt to lift the system (systems of parameters and all) 
to the rational integers, then the homology would necessarily have $2$-torsion. 
It bodes ill for attempts to get a counterexample to the topological 
version of the conjecture, using this machinery. 
Indeed, R\"uping and Stephan \cite{RS}
have proved that the counterexample of Iyengar-Walker does not lift to the 
topological setting.

There is the question of whether the Iyengar-Walker counterexample 
fits into the scheme presented here. The answer is that it does up to
quasi-isomorphism. 

\begin{Prop} \label{prop} 
Suppose that $G = \langle g_1, \dots, g_r \rangle$ is an elementary abelian
$p$-group of order $p^r$. The Koszul complex used by Iyengar and Walker to 
construct their counterexample is quasi-isomorphic to a complex $\CC_*$ 
constructed as above from a homogeneous set of parameters for $\hgs$. 
\end{Prop}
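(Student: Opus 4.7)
The plan is to pick a natural homogeneous system of parameters for $\hgs$ and show that the resulting $\CC_*$ agrees in $\D^b(kG)$ with the Iyengar-Walker Koszul complex. For $G$ elementary abelian of rank $r$ in odd characteristic, I would take $\zeta_i = y_i \in \HH{2}{G}{k}$, where $y_1, \ldots, y_r$ are the degree-two polynomial generators of
\[
\hgs \;\cong\; \Lambda_k(x_1, \ldots, x_r) \otimes_k k[y_1, \ldots, y_r],
\]
with $|x_i|=1$, $|y_i|=2$, and $y_i$ the Bockstein of $x_i$. These form an hsop of common degree $n = 2$, so each $\CC^i_*$ is the length-one complex $(L_{y_i} \to kG)$, and $\CC_* = \CC^1_* \otimes \cdots \otimes \CC^r_*$ is a bounded complex of projective $kG$-modules of length $r$. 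The Iyengar-Walker Koszul complex is $K = kG \otimes_k \Lambda_k(e_1, \ldots, e_r)$ with differential determined by $d(e_i) = g_i - 1$, also of length $r$ and consisting of free $kG$-modules.

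As a first check, K\"unneth over $k$ applied to the $\CC^i_*$ gives $\HHH_*(\CC_*) \cong \Lambda_k(e_1, \ldots, e_r)$ (each $e_i$ in homological degree $1$, trivial $G$-action, total $k$-dimension $2^r$); and the standard Koszul homology computation identifies $\HHH_*(K)$ with the same exterior algebra, by resolving $kG$ as a module over the polynomial ring $A = k[Y_1, \ldots, Y_r]$ (acting via $Y_i \mapsto g_i - 1$) on the regular sequence $(Y_1^p, \ldots, Y_r^p)$. To promote this homological match into a quasi-isomorphism in $\D^b(kG)$, I would exhibit both complexes as models for the same derived tensor product $kG \lotimes{A} k$. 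For $K$ this is direct: $K$ is the Koszul resolution of $k$ over $A$, base-changed to $kG$. For $\CC_*$, the pushouts $L_{y_i}$ give the dual version of the same calculation: one lifts the $y_i$ to pairwise commuting chain operators $\tilde y_i\colon P_* \to P_*[-2]$ on a minimal $kG$-resolution $P_* \to k$, giving $P_*$ the structure of a DG $A$-module, and then the construction of $\CC_*$ by iterated tensor of pushouts computes the same derived tensor product.

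The hard part will be matching the chain-level data in the second identification: the lifts of the $y_i$ are not canonical, and one has to verify that the specific totalization produced by the pushouts $L_{y_i}$ agrees up to quasi-isomorphism with a standard bar model for $kG \lotimes{A} k$. A more hands-on alternative, which I would pursue in parallel as a check, is to construct an explicit chain map $\CC_* \to K$: begin with the multiplication map $(\CC_*)_0 = kG \otimes_k \cdots \otimes_k kG \to kG = K_0$ (which is $kG$-linear for the diagonal action), extend by projectivity of the $\CC_*$-terms to higher homological degrees, and verify by a dimension count that the induced map on $\HHH_*$ is an isomorphism onto $\Lambda_k(e_1, \ldots, e_r)$ in each degree.
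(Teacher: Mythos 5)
Your choice of parameters ($\zeta_i = y_i$ in degree $2$) is exactly the one the paper makes, and your homology computations on both sides are correct; but the content of the proposition is the quasi-isomorphism itself, and that is precisely what both of your routes leave open. Route (a) you concede is unresolved (rigidifying the non-canonical lifts $\tilde y_i$ and comparing totalizations). Route (b) does not work as described: you cannot ``extend by projectivity'' a degree-zero map $(\CC_*)_0 \to K_0$ to a chain map $\CC_* \to K$, because $K$ is not acyclic --- at each stage the composite $f_{i-1}d^{\CC}$ lands in the cycles of $K_{i-1}$, and the obstruction to lifting it through $d^K_i$ is a class in $\HHH_{i-1}(K) \cong \Lambda^{i-1}(e_1,\dots,e_r) \neq 0$, so the lift need not exist. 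Even granted a chain map, a dimension count cannot show the induced map on homology is an isomorphism, since equidimensional spaces admit the zero map. (A smaller point: the multiplication map $kG^{\otimes r} \to kG$ is not $kG$-linear for the diagonal action unless $g^{r-1}=1$ for all $g$; the equivariant map is augmentation on all but one factor.)

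The missing idea --- and the paper's actual argument --- is to compare one tensor factor at a time, where no lifting obstruction arises. For each $i$ let $\CU^i_*$ be the two-term complex $A_i \xra{X_i} A_i$, $A_i = k[X_i]/(X_i^p)$, $X_i = g_i - 1$, viewed as $kG$-modules. The exact sequence $0 \to k \to U^i_1 \to U^i_0 \to k \to 0$ represents exactly the class $y_i \in \Ext^2_{kG}(k,k)$, so the comparison chain map from the minimal resolution $P_*$ to this sequence has $\zeta_i = y_i$ as its top component; here one lifts against $P_*$, which \emph{is} exact, so the chain map exists. By the universal property of the pushout defining $L_{y_i}$, this chain map factors through the middle row $0 \to k \to L_{y_i} \to P_0 \to k \to 0$, yielding a map of four-term exact sequences that is the identity on both outer copies of $k$, hence a quasi-isomorphism $\CC^i_* \to \CU^i_*$. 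Tensoring these $r$ maps over $k$ and applying K\"unneth gives a quasi-isomorphism $\CC_* \to \CU^1_* \otimes \cdots \otimes \CU^r_*$, and the latter complex is identified term by term, with the same differentials, with the Iyengar--Walker Koszul complex $\CK^1_* \otimes_{kG} \cdots \otimes_{kG} \CK^r_*$. This factorwise construction also makes it transparent that the quasi-isomorphism intertwines the degree-one chain maps $\vartheta_i$ with the standard exterior-algebra action on the Koszul complex, which is what one needs in order to transport the cone construction.
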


\begin{proof}
Assume that $X_i = g_i-1$ so that $kG \cong A_1 \otimes \dots \otimes A_r$
where $A_i \cong  k[X_i]/(X_i^p)$ is the subalgebra generated by $X_i$. 
Let $\CU_*^i$ be the complex $A_i \to A_i$ where the map is multiplication
by $X_i$. We regard $\CU_i$ as a complex of $kG$-modules which is annihilated
by $X_j$ for all $j \neq i$. Now form the diagram
\[
\xymatrix{
\dots \ar[r] & P_2 \ar[r] \ar[d]^{\zeta_i} & P_1 \ar[r] \ar[d] &
P_0 \ar[r] \ar@{=}[d] & k \ar[r] \ar@{=}[d] & 0 \\
0 \ar[r] & k \ar[r] \ar@{=}[d] & L_{\zeta_i} \ar[r] \ar[d] & 
P_0 \ar[r] \ar@{=}[d] & k \ar[r] \ar@{=}[d] &0 \\
0 \ar[r] & k \ar[r] & U_1^i \ar[r]^{X_i}  & U_0^i \ar[r]  & k \ar[r] & 0.
}
\]
Here $(P_*, \varepsilon)$ is the minimal projective $kG$-resolution of $k$,
and $\zeta_i$ is the unique cocycle in the cohomology class in 
$\Ext_{kG}(k,k) \cong \HHH^2(G,k)$ that is represented by the sequence
on the bottom row. Thus the chain map from the top row to the bottom,
must factor throught the middle row. This shows that there is a 
quasi-isomorphism from the complex $\CC^i$ formed from $\zeta_i$, as in 
the proof of the theorem, to the complex $\CU^i_*$. 
Thus, by tensoring, we get a 
quasi-isomorphism from $\CC_*$ to $\CU_* = \CU^1_* \otimes \dots \otimes 
\CU^r_*$. We note that there is an action of the exterior algebra
on $r$ generators by chain 
maps on $\CU_*$ defined very similarly to the action on the complex 
$C_*$ and that the 
quasi-isomorphism commutes with the action of the exterior algebra. 

The only thing left to finish the proof, is to show that the complex $U_*$
is quasi-isomorphic to the Koszul complex $\CK_*$ in the Iyengar-Walker 
counterexample. In fact, the identifications that we make between the two
complexes are so natural that it could really be said that the two are 
identical. The Koszul complex is the product $\CK_* = \CK^1_* 
\otimes_{kG} \dots
\otimes_{kG} \CK^r_*$ where $\CK^i$ is the complex
\[
\xymatrix{
0 \ar[r] & kG \ar[r]^{X_i} &  \ar[r] & 0 
}
\]
im degrees 1 and 0. For any choice $i_1, \dots, i_r$ of elements of the 
set $\{0,1\}$, we have that 
\[
\CK^1_{i_1} \otimes_{kG} \dots \otimes_{kG} \CK^r_{i_r} \cong kG \cong 
\CU^1_{i_1} \otimes \dots \otimes \CU^r_{i_r}
\]
and the maps, which consist of multiplications by the $X_i$'s is the 
same on both. The total differential on both complexes is given by the 
same rule. The complexes are naturally identified to be the same.
We check that the chian maps defining the action of the exterior algebra
are the same on both complexes. 
\end{proof} 

{\bf Acknowledgment.} \ Part of this paper was written while I was a 
virtual participant at the conference on 
``Rank Conjectures in Algebraic Topology and Commutative Algebra" 
at the Banff International Reseearh 
Station. I would like to thank BIRS and the organizers of the 
stimulating meeting. I also thank Mark Walker for helping me sort through the 
proof of the proposition in the paper.


\end{document}